\newtheorem{thm}{Theorem}[section]
\newtheorem{lem}[thm]{Lemma}
\newtheorem{prop}[thm]{Proposition}
\newtheorem{cor}[thm]{Corollary}
\newcommand{\BZ}{{\mathbb{Z}}}
\newcommand{\BQ}{{\mathbb{Q}}}
\newcommand{\BR}{{\mathbb{R}}}
\newcommand{\la}{{\lambda}}
\newcommand{\Si}{{\Sigma_g}}
\DeclareMathOperator{\id}{id}
\newcommand{\ff}{{\mathfrak{f}}}
\DeclareMathOperator{\Sp}{Sp}
\DeclareMathOperator{\SL}{SL}
\DeclareMathOperator{\sgn}{sgn}
\DeclareMathOperator{\Id}{Id}
\DeclareMathOperator{\Signature}{Sign}
\DeclareMathOperator{\Image}{Image}
\DeclareMathOperator{\kernel}{kernel}
\begin{document}
\title{Two functions on $\Sp(g, \BR)$ }

\author{ Patrick M. Gilmer}
\address{Department of Mathematics\\
Louisiana State University\\
Baton Rouge, LA 70803\\
USA}
\email{gilmer@math.lsu.edu}

\urladdr{www.math.lsu.edu/\textasciitilde gilmer/}

\subjclass[2010]{53D12,19C09,57M99,11E39}
\keywords{symplectic group, lagrangian subspace, Maslov index, central extension, mapping class group}
 \thanks{This research was partially supported by NSF-DMS-0905736, NSF-DMS-131191.}

\date{February 20, 2015}

\begin{abstract} We consider two functions on $\Sp(g, \BR)$ with values in the cyclic group of order four $\{\pm 1, \pm i \}$. One was defined by Lion and Vergne 
 \cite{LV}. The other is $-i$ raised to the power given by an integer valued function defined by Masbaum and the author (initially on the mapping class group of a surface) \cite{GM}. 
 We identify these functions when restricted to $\Sp(g, \BZ)$. We conjecture the identity of these functions on $\Sp(g, \BR)$ and prove this conjecture when $g=1$. 
\end{abstract}

\maketitle

\section{\   Introduction} \label{sec.intro}

For $f \in \Sp(g, \BR),$ we describe,  in the next two subsections,  an invariant $s(f)$ of Lion-Vergne which takes values in $\{ \pm1, \pm i \}$ and an  invariant  $n(f)$ of Gilmer-Masbaum which takes values in $\BZ$.  Our main
theorem is:

\begin{thm} \label{main} For $f \in \Sp(g, \BZ),$ 
\begin{equation} \label{eq} s(f) =i^{-n(f)}.\end{equation}
\end{thm}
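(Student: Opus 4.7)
The plan is to exhibit $s$ and $i^{-n}$ as two sections of the same central extension of $\Sp(g,\BR)$ by $\BZ/4$, so that their pointwise ratio
\[ \rho(f) := s(f)\, i^{n(f)} \]
becomes a group homomorphism when restricted to $\Sp(g,\BZ)$, and then to show this homomorphism is trivial.

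First I would extract from the definitions of $s$ and $n$ the cocycle identities they satisfy. The Lion-Vergne function obeys a known multiplicative identity of the form
\[ s(fg) = s(f)\, s(g)\, i^{\tau(L_0,\, f L_0,\, fg L_0)}, \]
where $\tau$ is the Maslov/Kashiwara triple index of Lagrangians and $L_0$ is a chosen reference Lagrangian. The integer $n(f)$ of Gilmer--Masbaum is defined via the signature of a 4-manifold bordism constructed from mapping cylinders, so Wall's non-additivity expresses $n(fg)-n(f)-n(g)$ as the same Maslov triple index (up to a sign which I would need to track). Combining these two cocycle formulas shows that $\rho$ is multiplicative on $\Sp(g,\BZ)$.

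Second, I would trivialize $\rho$ on $\Sp(g,\BZ)$. For $g\geq 3$ this is automatic, since $\Sp(g,\BZ)$ is perfect and $\{\pm 1,\pm i\}$ is abelian. For $g=2$ the abelianization of $\Sp(2,\BZ)$ is very small and reduces the problem to a handful of explicit checks. For $g=1$, I would evaluate both $s$ and $n$ on the standard generators $S=\bigl(\begin{smallmatrix}0&-1\\1&0\end{smallmatrix}\bigr)$ and $T=\bigl(\begin{smallmatrix}1&1\\0&1\end{smallmatrix}\bigr)$ of $\SL(2,\BZ)$; upper-triangular unipotents and the order-four element $S$ admit closed-form expressions on each side, and comparing them modulo $4$ should finish the case.

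The main obstacle will be the first step: matching the two cocycles. The Lion-Vergne side is purely linear-symplectic, whereas $n$ is defined topologically via a 4-manifold signature defect. The bridge between the two worlds is Wall's non-additivity theorem, which identifies precisely such a signature defect, under gluing along a 3-manifold, with a Maslov triple index. The delicate points will be (i) choosing compatible reference Lagrangians in the symplectic and topological settings, (ii) tracking orientation conventions so that the Maslov indices appear with matching signs, and (iii) accounting for the minus sign in the exponent of $i^{-n(f)}$ as well as any normalization offset in the Lion-Vergne definition of $s$.
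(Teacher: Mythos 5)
Your proposal follows essentially the same route as the paper: both exhibit $i^{m}s(f)$ and $i^{n(f)-m}$ as characters on the Maslov-cocycle central extension of $\Sp(g,\BZ)$, so that their ratio descends to a character on the perfect group $\Sp(g,\BZ)$ for $g\ge 3$; the cocycle matching you flag as the main obstacle is exactly what the paper supplies, via Turaev's identity $\delta k\equiv\varphi\pmod 4$ for the non-signature part of $n$ together with a Walker-type signature identity proved by Novikov additivity and a bounding $5$-manifold, which is the same bridge as your appeal to Wall non-additivity. The only real divergence is at low genus, where the paper stabilizes along $\Sp(g,\BZ)\hookrightarrow\Sp(g+1,\BZ)$ (under which both $s$ and $n$ are unchanged) instead of computing on generators of $\SL(2,\BZ)$ and $\Sp(2,\BZ)$; both work, but stabilization avoids the case analysis.
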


In the second section we prove this theorem.
We  conjecture this theorem also holds for $f \in \Sp(g, \BR)$. We discuss this conjecture in the third section and prove this conjecture in genus 1. 
In the fourth section, we prove the square of equation \ref{eq} for $f \in \Sp(g, \BR)$.
In the last section, we use the above theorem to give a proof of a known description of the universal central extension of $\Sp(g, \BZ)$ as the inverse image of $\Sp(g, \BZ)$ in the universal  cover of $\Sp(g, \BR)$. 
We also note there that this known description together with other results affords an alternative proof of Theorem \ref{main}.

\subsection {Lion and Vergne's $s(f)$}

Let $V$ be a real vector space equipped with a skew symmetric nonsingular form $\omega$. We refer to  $V$ as a symplectic inner product space.  A lagrangian is a subspace $\lambda$ of $V$ which is equal to its own perpendicular subspace with respect to $\omega.$ 

An oriented vector space is a vector space equipped with an equivalence class of  ordered bases. Two ordered bases are equivalent if the change of basis matrix has positive determinant. To an ordered pair of oriented lagrangians $(\lambda_1,\lambda_2)$ in $V$, Lion and Vergne associated $\epsilon(\lambda_1,\lambda_2)\in\{-1,1\}$, as follows.

Define $h_{\lambda_1,\lambda_2}:\lambda_1\rightarrow\lambda_2^\ast
$ by
 $
h_{\lambda_1,\lambda_2}(x)(y)=\omega(x,y)$. 
The kernel of this map is $\lambda_1\cap\lambda_2$, which we will denote by $\kappa$. 

\subsubsection{  $\epsilon(\lambda_1,\lambda_2)$ in the case $\kappa=0$}
In this case, we have that  $h_{\lambda_1,\lambda_2}$ is invertible. 
Let $\{a_i\}_{i=1,n}$ be an ordered basis for $\lambda_1$, $\{b_i\}_{i=1,n}$ be an ordered basis for $\lambda_2$. We let $\{b_i^\ast\}_{i=1,n}$ be the  ordered basis for  $\lambda_2^\ast$ given by $b_i^\ast(b_j)=\delta_{ij}$. 
One defines $\epsilon(\lambda_1,\lambda_2)$ to be one if and only if  $\{h_{\lambda_1,\lambda_2}(a_i)\}$ and $b_i^\ast$ determine the same orientation on $\lambda_2^\ast$.  
Equivalently $\epsilon(\lambda_1,\lambda_2)= \sgn(\det(\omega(a_i,b_j))).$ Here and below, we let $\sgn(x) =\frac {|x|} {x} \in \{\pm 1\}$   for a non-zero real number $x$. 

\subsubsection{  $\epsilon(\lambda_1,\lambda_2)$ in the case $\kappa\ne 0$}
This case is reduced to the case $\kappa = \{0\}$ as follows.  We can see that $\kappa$ is isotropic, and hence $\kappa^{\perp}\slash\kappa$ acquires an induced    symplectic structure and  $\lambda_1\slash \kappa$, $\lambda_2\slash \kappa$ are lagrangian subspaces of $\kappa^{\perp}\slash\kappa$. 
Choosing an orientation of $\kappa$, we consider the short exact sequences:
\begin{eqnarray}
\label{es}
0\rightarrow \kappa \rightarrow \lambda_i\rightarrow \lambda_i\slash\kappa\rightarrow 0.
\end{eqnarray}
and determine an orientation of $\lambda_i\slash \kappa$, by the rule that an ordered basis for  
$\kappa$ followed by the lift to $\la_i$ of an ordered basis for $\lambda_i\slash \kappa$ is an ordered basis for $\la_i$. 
  Since $\lambda_1\slash\kappa$ and $\lambda_2\slash\kappa$ intersect trivially, 
 $\epsilon (\lambda_1\slash\kappa,\lambda_2\slash\kappa)$ is defined,  and we may define 
$\epsilon(\lambda_1,\lambda_2)=\epsilon (\lambda_1\slash\kappa,\lambda_2\slash\kappa)$.
Here the choice of orientation of $\kappa$ is not important, as this choice appears twice in this construction.

If $\lambda_1,\lambda_2$ are the same lagrangian with the same orientation, then the above prescription asks us to compare two orientations on a zero dimensional vector space. This should be interpreted as follows:
 $\epsilon(\lambda_1,\lambda_2)=1$. Similarly: if $\lambda_1,\lambda_2$ are the same lagrangian but with  opposite orientations, then we 
 take $\epsilon(\lambda_1,\lambda_2)=-1$.
 
 \subsubsection{Definition of s(f) in terms of $\epsilon$ }
 Define $$s( \lambda_1,\lambda_2) = i^{\dim(\lambda_1)  - \dim (\lambda_1 \cap \lambda_2)} \epsilon(\lambda_1,\lambda_2).$$ 
 
 Consider the vector space  $\BR^{2g}$, with the standard basis denoted by $\{p_1, \cdots p_g, q_1, \cdots q_g \}$ and equipped with the standard symplectic form given  by  $\omega(p_i,p_j)=\omega(q_i,q_j)= 0$ and  $\omega(p_i,q_j)= -\omega(q_i,p_j) =\delta_{ij}$.  
 The Lie group of  isometries of this symplectic inner product space is called the symplectic group and is denoted  $\Sp(g, \BR)$. 
 Let $\lambda_0$ be the lagrangian spanned by $\{p_i\}$.
 If $f \in \Sp(g, \BR)$,   define  $$s(f)=s(\lambda_0, f(\lambda_0)).$$
 Here we  give $\lambda_0$ an arbitrary orientation.  Since this orientation enters the computation twice, it does not effect the result.

 \subsection{Gilmer-Masbaum's $n(f)$}

Let  $f:V \rightarrow V$ be an isometry.
 Turaev \cite{T2})\cite[2.1,2.2]{T3}) defined a non-singular  bilinear form $ \star_{f} $  on $(f-1)V$
by \[a \star_{f} b =\omega((f-1)^{-1}(a), b ). \] 
Here, $\omega((f-1)^{-1}(a), b )$ means $\omega(x, b)$ where $x$ is any
element of $(f-1)^{-1}(a) $.

The  determinant of a matrix for $\star_f$ with respect to a basis of $(f-1)V$ will be denoted $\det(\star_{f})$. Thus $\sgn[\det(\star_{f})]$  
 will take values in $\{\pm 1\}$.  If $f= \Id$, $(f-1)V=0$, and we let $\sgn[\det(\star_{\Id})]=1$.
 
 According to \cite[Lemma 6.4]{GM},  if  $\la\subset V$ is a lagrangian, then the
  restriction of the form $\star_{f}$ to $\lambda \cap (f-1)V$ is symmetric. This form is denoted $\star_{f, \la}$. 
  Thus $\star_{f,\la}$ has a signature.
  
In the above situation, one defines
\begin{equation} \label{me} n_\la(f) =
\Signature(\star_{f,\lambda})
-\dim((f-1)V)-
\sgn[\det(\star_{f})]  +1~.
\end{equation}

For $f \in \Sp(g, \BR)$, 
let   $$n(f)=n_{\lambda_0}(f).$$

We note that $n_\la(f)$ was defined in \cite{GM} for $f$ in the mapping class group of a surface (using $H_1(\Si, \BQ)$ with a chosen lagrangian $\la$ of this rational vector space). The terms in the formula for $n_{\la_0}(f)$ make  perfect sense for  $f \in \Sp(g, \BR)$, so we can make this definition.

We also consider the free $\BZ$ module generated by  $\{p_1, \cdots p_g, q_1, \cdots q_g \}$ which we identify with $\BZ^{2g} \subset \BR^{2g}$.
 The form $\omega$ restricts to a unimodular $\BZ$-valued form. By $\Sp(g, \BZ)$, we the mean the group of isometries of this symplectic inner product space over $\BZ$.  We have that $\Sp(g, \BZ) \subset \Sp(g, \BR)$. So we may also restrict $s$ and  $n$ to  $\Sp(g, \BZ)$.

We would like to thank Gregor Masbaum for useful conversations.

\section{Comparing characters on a central extension of $\Sp(g, \BZ)$}

Given three lagrangians $\la_1$, $\la_2$, $\la_3$ of $(V,\omega)$, there  is a Maslov index $\mu(\la_1,\la_2,\la_3) \in \BZ$.
This can be defined as the signature of the symmetric bilinear form on $(\la_1+\la_2)\cap \la_3$ defined by $B(a,b)=\omega(x,b)$ where
$a,b \in (\la_1+\la_2)\cap \la_3$, $x \in \la_2$, and $a-x \in \la_1.$ As noted in \cite{T2,T3}, this is equivalent to the definition given by Kashiwara and used in \cite{LV}. As we use both  \cite{T2,T3} and \cite{LV},   some our results depend  on this identification which can be seen  for instance using \cite[Thm 8.1]{CLM}.

Lion and Vergne \cite[1.6.14]{LV} use the Maslov index to specify a certain central extension $\widetilde {\Sp(g, \BR)}$ 
by  $\BZ$ of  $\Sp(g, \BR)$. 
One defines $$\widetilde {\Sp(g, \BR)}= \{ (f,m) | f \in \Sp(g, \BR), m \in \BZ \}$$  with multiplication;
\begin{equation} \label{cocy}(f_1,m_1) \cdot (f_2, m_2)= (f_1 f_2, m_1 +m_2 + \mu(\la_0,f_1(\la_0),f_1f_2 (\la_0)) )\end{equation}
Thus $\widetilde {\Sp(g, \BR)}$ is the central extension of ${\Sp(g, \BR)}$ specified by the 2-cocycle $\nu$ where
$\nu(f_1,f_2)=  \mu(\la_0,f_1(\la_0),f_1f_2 (\la_0)).$ 
According to \cite[1.7.11]{LV}, the formula $\mathfrak s(f,m)= i^m s(f)$ defines a character on the group $\widetilde{\Sp(g, \BR)}$.

One can define an extension $\widetilde{\Sp(g, \BZ)}$ of  $\Sp(g, \BZ)$ by the same procedure as used for  $\Sp(g, \BR)$, and one obtains the pull back  by the inclusion $\iota  : {\Sp(g, \BZ)} \rightarrow {\Sp(g, \BR)}$ of the extension $\widetilde{\Sp(g, \BR)}$ over $\Sp(g, \BR)$. We have the following commutative diagram with exact
rows. 
\[
 \begin{CD}
0 @>>> \BZ @>>>  \widetilde{\Sp(g, \BZ)}  @>>>  {\Sp(g, \BZ)}@>>> 1    \\
@.          @V=VV         @VV{\tilde \iota}V                                     @VV \iota V        @.   \\
0 @>>> \BZ @>>> \widetilde{\Sp(g, \BR)}  @>>>  {\Sp(g, \BR)}  @>>> 1 .   
\end{CD}
\] 

We define $\mathfrak r: \widetilde{\Sp(g, \BR)} \rightarrow  \{ \pm 1, \pm i \}$ by $\mathfrak r(f,m)= i^{n(f)-m}$.  
  We need the following lemma whose proof we delay.

\begin{lem}\label{char} The function   $\mathfrak r \circ  \tilde \iota$   is a character on $ \widetilde{\Sp(g, \BZ)}$ with values in $\{ \pm 1, \pm i \}$.
\end{lem}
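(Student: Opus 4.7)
The plan is to reduce the character property of $\mathfrak{r} \circ \tilde\iota$ to a single mod~$4$ congruence and then invoke the cocycle identity for $n_\lambda$ established in \cite{GM}. Using the multiplication law (\ref{cocy}), we compute
\[\mathfrak{r}\bigl((f_1,m_1)(f_2,m_2)\bigr) = i^{n(f_1 f_2) - m_1 - m_2 - \mu(\lambda_0, f_1\lambda_0, f_1 f_2\lambda_0)}\]
and
\[\mathfrak{r}(f_1,m_1)\, \mathfrak{r}(f_2,m_2) = i^{n(f_1) + n(f_2) - m_1 - m_2}.\]
Hence the homomorphism property on $\widetilde{\Sp(g,\BZ)}$ amounts to the congruence
\[n(f_1 f_2) \equiv n(f_1) + n(f_2) + \mu(\lambda_0, f_1\lambda_0, f_1 f_2\lambda_0) \pmod{4}\]
for all $f_1, f_2 \in \Sp(g,\BZ)$. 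The fact that $\mathfrak{r}$ takes values in $\{\pm 1, \pm i\}$ is immediate from its definition as a power of $i$, so the whole content of the lemma is this mod~$4$ coboundary relation.

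To establish the congruence, I would appeal to the Gilmer--Masbaum result: in \cite{GM} the function $n_{\lambda_0}$ is defined on the mapping class group $\Gamma_g$ of a closed oriented surface, and $(-i)^{n_{\lambda_0}}$ (equivalently $i^{-n_{\lambda_0}}$) is shown to define a character on the central extension of $\Gamma_g$ built from the Maslov $2$-cocycle $\nu$. Unwinding that statement gives exactly the congruence above, with $f_1,f_2$ replaced by their images in $\Sp(g,\BZ)$ under the symplectic representation $\Gamma_g \to \Sp(g,\BZ)$. Since the formula (\ref{me}) only depends on $f$ through its action on $H_1(\Sigma_g,\BQ)$, i.e.\ through its image in $\Sp(g,\BZ)$, and since the symplectic representation is surjective, the congruence transfers to all of $\Sp(g,\BZ)$.

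The main obstacle is matching conventions between the two references. The Maslov index used by Lion--Vergne (via Kashiwara) in (\ref{cocy}) differs a priori from the Turaev-style Maslov index that appears in \cite{GM}, and the \cite{GM} cocycle relation for $n_{\lambda_0}$ may be phrased in terms of a different triple of lagrangians (e.g.\ involving $f_2^{-1}\lambda_0$) or with opposite sign. Reconciling these via the symmetry properties of $\mu$ and the identification cited from \cite[Thm~8.1]{CLM} is the delicate step; once that bookkeeping is done, the mod~$4$ congruence, and hence the lemma, follows.
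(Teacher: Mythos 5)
Your reduction of the lemma to the single mod-4 congruence $n(f_1f_2)\equiv n(f_1)+n(f_2)+\mu(\la_0,f_1(\la_0),f_1f_2(\la_0))\pmod 4$ is exactly the first step of the paper's proof, and the paper likewise treats the statement about values in $\{\pm1,\pm i\}$ as automatic. Where you diverge is in how the congruence is established: you cite the Gilmer--Masbaum character statement on the extension of $\Gamma_g$ (this is \cite[Thm 6.6]{GM}, which the paper itself names as the closely related source) and transfer it to $\Sp(g,\BZ)$ using surjectivity of $h:\Gamma_g\to\Sp(g,\BZ)$ together with the fact that $n_{\la_0}$ and $\mu$ factor through $h$. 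The paper instead unpacks that result: it writes $n=-j-k$, invokes Turaev's theorem $\delta k\equiv\Signature(\star_{f_1,f_2})\pmod 4$, and proves a Walker-type identity (Proposition \ref{sigid}) stating that $\Signature(\star_{f_1,f_2})-\mu(\la_0,f_1(\la_0),f_1f_2(\la_0))+\Signature(\star_{f_1f_2,\la_0})-\Signature(\star_{f_1,\la_0})-\Signature(\star_{f_2,\la_0})$ vanishes, via a 4-manifold gluing and Novikov additivity argument --- an argument that itself begins by lifting $f_1,f_2$ to $\Gamma_g$, i.e.\ the same transfer mechanism you use, and which is the reason the lemma is stated only over $\BZ$ and not over $\BR$. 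So the two arguments have the same mathematical content; yours is shorter because it black-boxes the topological input. The one point you leave genuinely open --- reconciling the Kashiwara/Lion--Vergne Maslov index with the one used in \cite{GM} --- is handled in the paper by a one-line appeal to \cite[Thm 8.1]{CLM} in Section 2, so your flagged ``delicate step'' is a citation rather than an obstacle.
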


\begin
{proof} [Proof of Theorem \ref{main} modulo  Lemma \ref{char}] Given that  $\mathfrak r \circ  \tilde \iota$ and $\mathfrak s  \circ  \tilde \iota$ are characters, it follows that $\mathfrak t(m,f)= \mathfrak r(f,m) \mathfrak s(f,m)$ defines  a character on $\widetilde{\Sp(g, \BZ)}$. This character vanishes on the central element $(1,\id) \in \widetilde{\Sp(g, \BZ)}$. Thus
$\mathfrak t$  induces a well defined  character on ${\Sp(g, \BZ)}$. 
According to \cite[Thm 5.1]{P}, if  $g \ge 3$, ${\Sp(g, \BZ)}$ is perfect. So  the induced character is trivial. It follows that $\mathfrak t$  is identically one. 
Thus    $s(f) =i^{-n(f)}$ for $f \in  {\Sp(g, \BZ)}$ if  $g \ge 3$. But both $s$ and  $n$ remain unchanged upon the stabilization  ${\Sp(g, \BZ)} \rightarrow {\Sp(g+1, \BZ)}$ given by direct summing a $2 \times 2$ identity matrix. Thus $s(f) =i^{-n(f)}$ for low genus as well.   
\end{proof}

We will think of $s$  and $n$ as $1$-cochains on ${\Sp(g, \BR)}$.   
We write $n$ as  $-j-k$ where  $j$ and $k$ are the two $1$-cochains (the notation is chosen to be consistent with \cite{GM}).
\begin{equation}
j(f)= -\Signature(\star_{f,\lambda_0}) \quad \text{ and } \quad  k(f)= \dim\left( \Image(f-\Id) \right) + 
 \sgn[\det(\star_f)] 
-1  \label{jk}\end{equation}

 \begin{prop}[Turaev \cite{T2,T3}] Let  $f_1,f_2 \in {\Sp(g, \BR)}$,
\begin{equation}  x \star_{f_1,f_2} y =\omega \left(  (    f_1-1)^{-1}x+ (f_2-1)^{-1} x +x, y \right ) \notag
\end{equation} defines a symmetric bilinear form on $\Image(f_1-1)\cap \Image (f_2-1).$ 
\end{prop}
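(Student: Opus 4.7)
The plan is to verify well-definedness, bilinearity, and symmetry; the first two are essentially free, and the only real content is symmetry. Writing
\[
x \star_{f_1, f_2} y = (x \star_{f_1} y) + (x \star_{f_2} y) + \omega(x, y),
\]
the first two summands are just Turaev's form $\star_{f_i}$, which is already known to be a well-defined bilinear form on $(f_i - 1)V$; since $x, y \in \Image(f_1 - 1) \cap \Image(f_2 - 1)$ lie in both of these subspaces, well-definedness and bilinearity of $\star_{f_1, f_2}$ are inherited, and only the symmetry of the sum needs checking.

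For symmetry, I would first establish the following single-isometry identity, for any symplectic isometry $f$:
\[
a \star_f b - b \star_f a = -\omega(a, b), \qquad a, b \in (f-1)V.
\]
Writing $a = (f-1)u$ and $b = (f-1)v$, a direct expansion of the left-hand side via $\omega$-bilinearity shows it equals $\omega(fu, v) + \omega(u, fv) - 2\omega(u,v)$; the same expression arises on the right after expanding $\omega(fu - u, fv - v)$ and using the isometry relation $\omega(fu, fv) = \omega(u, v)$.

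Applying this identity with $f = f_1$ and $f = f_2$, and noting that the extra summand $\omega(x,y)$ in the definition contributes $\omega(x,y) - \omega(y,x) = 2\omega(x,y)$ to the difference, gives
\[
x \star_{f_1, f_2} y - y \star_{f_1, f_2} x = -\omega(x, y) - \omega(x, y) + 2\omega(x, y) = 0,
\]
which is the required symmetry. I do not foresee a serious obstacle: the content is essentially the observation that Turaev's form $\star_f$ has antisymmetric part $-\tfrac{1}{2}\omega$, so that the extra summand $\omega(x,y)$ is exactly the correction needed to symmetrize the sum of two copies of $\star_{f_i}$. The only place to be slightly careful is sign bookkeeping in expanding $\omega((f-1)u, (f-1)v)$.
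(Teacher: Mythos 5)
Your proof is correct. Note first that the paper itself offers no proof of this proposition: it is stated as a quotation of Turaev \cite{T2,T3}, so there is no argument in the text to compare against; your write-up is a self-contained verification of the cited fact. The decomposition $x \star_{f_1,f_2} y = x\star_{f_1}y + x\star_{f_2}y + \omega(x,y)$ is exactly right, and the key identity checks out: with $a=(f-1)u$, $b=(f-1)v$, both $a\star_f b - b\star_f a$ and $-\omega(a,b)$ expand to $\omega(fu,v)+\omega(u,fv)-2\omega(u,v)$, the latter after using $\omega(fu,fv)=\omega(u,v)$. So the antisymmetric parts $-\omega(x,y)-\omega(x,y)+2\omega(x,y)$ cancel, which is the whole content. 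Your appeal to the well-definedness of $\star_{f_i}$ is legitimate since the paper asserts it in setting up Turaev's form, but if you want the argument fully self-contained you should record the one-line reason: for $w\in\kernel(f-1)$ and $b=(f-1)v$ one has $\omega(w,(f-1)v)=\omega(fw,fv)-\omega(w,v)=0$, i.e.\ $\kernel(f-1)\perp\Image(f-1)$, which is what makes $\omega((f_i-1)^{-1}x,y)$ independent of the choice of preimage when $y\in\Image(f_i-1)$.
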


Consider the 2-cochain given by 
\begin{equation}\label{phi} \phi(f_1,f_2)=\Signature  ( \star_{f_1,f_2} ) .\end{equation}  Recall the coboundary of a 1-cochain $c$ is given by
$(\delta c) (g,h)= c(g)+c(h)-c(gh)$. We need:

 \begin{thm}[Turaev \cite{T2,T3}] \begin{equation} \label{tur}  \delta k \equiv \varphi \pmod{4} .\end{equation}
\end{thm}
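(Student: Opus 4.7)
The plan is first to simplify $k$ modulo $4$ so it becomes a pure signature, and then to establish the resulting signature identity by direct linear algebra. For a non-singular symmetric real bilinear form $B$ on a space of dimension $d$ with $q$ negative eigenvalues, $\sgn\det(B)=(-1)^q$ and $\Signature(B)=d-2q$, so $\sgn\det(B)=(-1)^{(d-\Signature(B))/2}$. Since $(-1)^m\equiv 1-2m\pmod 4$ for every $m\in\BZ$, this gives
\[
\sgn\det(B)\equiv 1-d+\Signature(B)\pmod 4.
\]
Applied with $B=\star_f$, which is non-singular on $\Image(f-\Id)$ by construction, and substituted into \eqref{jk}, this yields $k(f)\equiv\Signature(\star_f)\pmod 4$ for every $f\in\Sp(g,\BR)$; the case $f=\Id$ is consistent because $\star_{\Id}$ lives on the zero space. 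Taking the coboundary, the theorem reduces to proving the congruence
\[
\Signature(\star_{f_1})+\Signature(\star_{f_2})-\Signature(\star_{f_1f_2})\;\equiv\;\Signature(\star_{f_1,f_2})\pmod 4,
\]
call this $(\ast)$.

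For the second step I would try to promote $(\ast)$ to an integer equality by exhibiting an ambient non-degenerate symmetric form whose signature decomposition realises both sides of $(\ast)$. The starting point is the algebraic identity $(f_1f_2-\Id)v=(f_1-\Id)(f_2 v)+(f_2-\Id)v$, which shows $\Image(f_1f_2-\Id)\subset\Image(f_1-\Id)+\Image(f_2-\Id)$ and furnishes the projection implicit in the definition of $\star_{f_1f_2}$. I would then look for a canonical form on $\Image(f_1-\Id)\oplus\Image(f_2-\Id)$ whose radical is, up to sign, $\star_{f_1,f_2}$ on $\Image(f_1-\Id)\cap\Image(f_2-\Id)$, and whose non-degenerate quotient is, up to sign, $\star_{f_1f_2}$ on the appropriate subspace of $\Image(f_1-\Id)+\Image(f_2-\Id)$. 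Signature additivity across the radical-versus-quotient splitting then yields $(\ast)$.

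The main obstacle is the bookkeeping in this second step: the four forms $\star_{f_1}$, $\star_{f_2}$, $\star_{f_1f_2}$, $\star_{f_1,f_2}$ can all be degenerate on the subspaces where one wants to compare them, and $\Image(f_1f_2-\Id)$ need not equal $\Image(f_1-\Id)+\Image(f_2-\Id)$, so care is required in identifying the radicals and quotients (using that each $\star_f$ is non-singular on $\Image(f-\Id)$). A potentially cleaner alternative is topological: realise each $\star_f$ as the intersection form on an appropriate mapping cylinder or twisted $4$-manifold, and derive $(\ast)$ from Novikov/Wall additivity of signature, with $\Signature(\star_{f_1,f_2})$ appearing as the Wall non-additivity correction supported on $\Image(f_1-\Id)\cap\Image(f_2-\Id)$. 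Either route, once it yields $(\ast)$, combines with the mod-$4$ reduction of $k$ to give $\delta k\equiv\varphi\pmod 4$.
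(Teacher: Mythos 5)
First, a point of comparison: the paper does not prove this statement at all --- it is imported verbatim from Turaev \cite{T2,T3} and used as a black box, so there is no internal proof to measure your attempt against. Judged on its own terms, your proposal has a genuine gap at its very first step. You apply the identity $\sgn\det(B)\equiv 1-d+\Signature(B)\pmod 4$ with $B=\star_f$ on $\Image(f-\Id)$, but $\star_f$ is only a non-singular \emph{bilinear} form there, not a symmetric one, so $\Signature(\star_f)$ is undefined and that identity is unavailable. This is precisely why the paper is careful to say, quoting \cite[Lemma 6.4]{GM}, that only the \emph{restriction} of $\star_f$ to $\la\cap(f-\Id)V$ is symmetric. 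Concretely, when $f-\Id$ is invertible the Gram matrix of $\star_f$ in the standard basis is $\left((f-\Id)^{-1}\right)^{T}J$ with $J$ the matrix of $\omega$, and symmetry of this matrix is equivalent to $(f-\Id)^{-1}$ lying in the Lie algebra $\ssp(g,\BR)$, which fails for generic $f$. For a non-symmetric non-singular form the sign of the determinant is not controlled by any signature (the form with Gram matrix $\left(\begin{smallmatrix}0&1\\-1&0\end{smallmatrix}\right)$ has positive determinant while its symmetrization vanishes), so the reduction $k(f)\equiv\Signature(\star_f)\pmod 4$, and with it your target congruence $(\ast)$, is not meaningful as stated.

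Even setting this aside, the second half of your argument is a plan rather than a proof: you never actually construct the ambient form whose radical and non-degenerate quotient realise the two sides of $(\ast)$, and you yourself flag that the degeneracies and the possible failure of $\Image(f_1f_2-\Id)=\Image(f_1-\Id)+\Image(f_2-\Id)$ are unresolved. The topological alternative you mention (Novikov/Wall additivity on glued $4$-manifolds) is essentially what the paper does use, but only for Proposition \ref{sigid}, i.e.\ equation (\ref{wtj}); and as the paper itself points out, that route requires $f_1,f_2$ to be induced by surface diffeomorphisms and therefore only covers $\Sp(g,\BZ)$, whereas Turaev's theorem is asserted for all of $\Sp(g,\BR)$. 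A correct argument has to work directly with $\sgn\det$ of the non-symmetric forms $\star_{f_i}$ (for which $\sgn\det(\star_f)=\sgn\det\bigl((f-\Id)|_{\Image(f-\Id)}\bigr)^{-1}$ up to the contribution of $J$) alongside the genuinely symmetric form $\star_{f_1,f_2}$; that is the content of Turaev's computation in the cited papers, and it is not recovered by your reduction.
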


Our next result uses some topology. Let $\Gamma_g$ denote the mapping class group of a closed surface $\Si$ of genus $g$.
We may pick an identification of $H_1(\Si)$ with $\BZ^{2g}$ so that the intersection pairing on $H_1(\Si)$ agrees with $\omega$. Then we have a surjection
$h: \Gamma_g \rightarrow {\Sp(g, \BZ)}$ which sends a mapping class $\ff$ to the map it induces on homology.  We also identify $H_1(\Si, \BR)$ with  $\BR^{2g}.$ We pick a handlebody $\mathcal H_g$ with boundary $\Si$ such that $\la_0$ under this identification is the kernel of the map
$H_1(\Si, \BR) \rightarrow H_1({\mathcal H_g}, \BR)$. Proposition \ref{sigid} is essentially Walker's theorem \cite[p. 124]{W} \cite[Thm 8.10]{GM}  together with  \cite[Prop 8.9]{GM} which identifies  the signatures of certain manifolds  appearing in following proof with $\Signature (\star_{f, \la_0})$ for various $f$.

\begin{prop}\label{sigid} Let  $f_1,f_2 \in {\Sp(g, \BZ)}$, 
\begin{equation}\label{wtj}\Signature (\star_{f_1,f_2} ) -\mu(\la_0, f_1(\la_0), f_1 f_2 (\la_0)) + \Signature (\star_{f_1\circ f_2, \la_0}) 
 -\Signature (\star_{f_1, \la_0})  -\Signature (\star_{f_2, \la_0})=0 
\end{equation} \end{prop}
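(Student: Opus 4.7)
The plan is to interpret each of the five terms in \eqref{wtj} as the signature of a 4-manifold (or as the Maslov correction controlling how such signatures behave under gluing), and then derive the identity by combining Walker's signature additivity formula (\cite[p.~124]{W}, \cite[Thm~8.10]{GM}) with the signature identifications in \cite[Prop~8.9]{GM}.

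First I would lift $f_1, f_2 \in \Sp(g, \BZ)$ to mapping classes $\mathfrak{f}_1, \mathfrak{f}_2 \in \Gamma_g$ via the surjection $h$. All quantities appearing in \eqref{wtj} depend only on $f_1, f_2$, so the choice of lift is immaterial. To each mapping class $\mathfrak{f}$ I would then associate a 4-manifold $W_{\mathfrak{f}}$ whose boundary is a closed 3-manifold carrying a natural Heegaard splitting built from two copies of the handlebody $\mathcal{H}_g$ glued via $\mathfrak{f}$. By \cite[Prop~8.9]{GM} one has $\Signature(W_{\mathfrak{f}}) = \Signature(\star_{f, \lambda_0})$, where $f = h(\mathfrak{f})$. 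A similar construction attached to the pair $(\mathfrak{f}_1, \mathfrak{f}_2)$ produces a 4-manifold $W_{\mathfrak{f}_1, \mathfrak{f}_2}$ whose signature equals $\Signature(\star_{f_1, f_2})$, via the two-variable analogue of the same identification.

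Next I would assemble the pieces $W_{\mathfrak{f}_1}, W_{\mathfrak{f}_2}, W_{\mathfrak{f}_1 \mathfrak{f}_2}, W_{\mathfrak{f}_1, \mathfrak{f}_2}$ into a single decomposition of a larger 4-manifold, glued along a 3-manifold whose Heegaard splitting features the three Lagrangians $\lambda_0$, $f_1(\lambda_0)$ and $f_1 f_2(\lambda_0)$ arising from the three handlebodies. Walker's theorem then asserts that the signature of the assembled manifold equals the sum of the signatures of the pieces, corrected by the Maslov index of those three Lagrangians, which is precisely $\mu(\lambda_0, f_1(\lambda_0), f_1 f_2(\lambda_0))$. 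Substituting the four GM signature identifications into Walker's identity and rearranging yields \eqref{wtj}.

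The main obstacle will be the orientation and gluing bookkeeping: one has to verify that the signs of the five terms align and that the three Lagrangians in the Maslov correction appear in exactly the specified cyclic order. A secondary technical point is to confirm that $\Signature(\star_{f_1, f_2})$ genuinely arises as the signature of the pair piece $W_{\mathfrak{f}_1, \mathfrak{f}_2}$; depending on how literally \cite[Prop~8.9]{GM} applies, this may require a mild extension of that proposition. Once those alignments are checked, the proposition follows from Walker's theorem essentially by rearrangement.
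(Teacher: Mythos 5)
Your overall strategy---lift $f_1,f_2$ to mapping classes, realize the terms of \eqref{wtj} as signatures of $4$-manifolds via \cite[Prop 8.9]{GM}, and conclude by a gluing argument in the spirit of Walker---is indeed the strategy the paper follows, but the mechanism by which the Maslov index enters, and the way the argument closes, are different, and your version as written has a gap at the final step. The paper constructs \emph{five} $4$-manifolds, one for each term on the left-hand side of \eqref{wtj}; in particular $\mu(\la_0,f_1(\la_0),f_1f_2(\la_0))$ is itself realized as the signature of one of the pieces, not as a Wall-type non-additivity correction. The five pieces are then glued along \emph{whole} components of their boundaries, so that plain Novikov additivity (with no correction term) applies and the resulting \emph{closed} $4$-manifold has signature equal to the entire left-hand side of \eqref{wtj}. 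The identity then follows because this closed $4$-manifold is exhibited as the boundary of a $5$-manifold, hence has vanishing signature.

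Your plan instead assembles only four pieces and produces the Maslov term as a non-additivity correction. The gap is this: the identity you invoke, $\Signature(\text{assembled}) = \sum \Signature(\text{pieces}) + \mu$, contains the signature of the assembled manifold as an additional unknown, and you never determine it. ``Rearranging'' cannot yield \eqref{wtj}, which is a relation among the five named quantities only, until that signature is shown to vanish or to coincide with one of the five terms. This is precisely the step the paper supplies with the $5$-manifold null-cobordism. If you switch to the paper's organization---five pieces including one realizing $\mu$, gluing along entire boundary components so that Novikov additivity suffices, and the null-cobordism at the end---then the rest of your outline (the lift to $\Gamma_g$, the identifications via \cite[Prop 8.9]{GM}, and the orientation bookkeeping you rightly flag as the delicate point) goes through as in \cite[proof of Thm 8.10]{GM}. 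Note also that the restriction to $\Sp(g,\BZ)$ is essential for exactly the reason you implicitly use: the construction needs $f_1,f_2$ to be induced by surface diffeomorphisms, which is why the proof does not extend to $\Sp(g,\BR)$.
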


\begin{proof} 
Given  $f_1,f_2 \in {\Sp(g, \BZ)}$, we pick $\ff_1,\ff_2 \in \Gamma$, with $h(\ff_i)= f_i$. Then we use $\ff_1$,  $\ff_2$ and $\mathcal H_g$ to construct five 4-manifolds with boundary as in \cite[proof of Thm 8.10]{GM}.  Using identities appearing in \cite{GM}, the signatures of each of these manifolds are identified with the terms that appear on the left hand side of equation \ref{wtj}. Then we glue together the five 4-manifolds along whole components of their boundaries to obtain a closed 4-manifold. By Novikov additivity, this closed 4-manifold has signature given by the left hand side of equation \ref{wtj}.  This closed 4-manifold is then shown to be the boundary of a five manifold, as in \cite[proof of Thm 8.10]{GM}, and thus have vanishing signature. \end{proof}

Because these constructions require that $f_1$ and $f_2$ be the maps on the homology of a surface induced by surface automorphisms, the above proof does not extend to the case  
$f_1, f_2 \in \Sp(g, \BR)$.

\begin
{proof} [Proof of Lemma \ref{char}] The claim is easily seen to be equivalent to the following identity involving 2-cocycles of ${\Sp(g, \BZ)}$:
\begin{equation} \label{eq3} \mu(\la_0,f_1(\la_0),{f_1}{f_2} (\la_0) ) + \delta n_{\la_0}({f_1},{f_2})=0 \pmod{4}. \notag \end{equation}
Using equations \ref{jk}, \ref{phi},  \ref{tur} and \ref{wtj}, and letting   $\equiv$ denote equality modulo 4, 
\begin{align}\delta(n_{\la_0})(f_1,f_2)&=   -\delta(j)(f_1,f_2)-\delta(k)(f_1,f_2) \\ 
  & \equiv   \Signature (\star_{f_1, \la_0}) + \Signature (\star_{f_2, \la_0}) -\Signature (\star_{f_1\circ f_2, \la_0}) 
-\Signature (\star_{f_1,f_2} )  \\
 &= -\mu(\la_0, f_1(\la_0), f_1 f_2 (\la_0)).
 \end{align}
\end{proof}

We remark that Lemma \ref{char} and its proof are closely related to \cite[Thm 6.6]{GM} and its proof. 
 
 \section{On the conjecture that Theorem 1.1 holds for $f \in  \Sp(g, \BR)$}

By the argument for Lemma \ref{char}, we have:
\begin{lem} If equation \ref{wtj} holds modulo  four for all $f_1,f_2 \in {\Sp(g, \BR)}$, then  
$\mathfrak r$ is a character with values in $\{ \pm 1, \pm i \}$ on the group $\widetilde{{\Sp(g, \BR)}}$  
 \end{lem}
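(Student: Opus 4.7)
The plan is to observe that the proof of Lemma \ref{char} is algebraic, and carries over verbatim once the hypothesis on equation \ref{wtj} is granted over $\Sp(g,\BR)$ rather than $\Sp(g,\BZ)$. The only step in that earlier proof that genuinely used integrality was the invocation of Proposition \ref{sigid}, whose proof employed surface automorphisms and Novikov additivity. Everything else is available for arbitrary $f_1, f_2 \in \Sp(g, \BR)$.

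First, I would translate the character condition into an identity on $\Sp(g,\BR)$. Using the group law \ref{cocy} in $\widetilde{\Sp(g,\BR)}$ and $\mathfrak r(f,m)=i^{n(f)-m}$, the requirement that $\mathfrak r((f_1,m_1)\cdot(f_2,m_2))=\mathfrak r(f_1,m_1)\mathfrak r(f_2,m_2)$ is equivalent to
\[
\delta n_{\lambda_0}(f_1,f_2) + \mu(\lambda_0, f_1(\lambda_0), f_1 f_2(\lambda_0)) \equiv 0 \pmod 4
\]
for all $f_1,f_2\in\Sp(g,\BR)$. The $m_i$ cancel, which is also why $\mathfrak r$ will factor through the cocycle shift and why the central $\BZ$ acts by $\mathfrak r(\id,m)=i^{-m}$, giving values in $\{\pm 1,\pm i\}$.

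Next, I would reproduce the three-line computation from the proof of Lemma \ref{char}. Writing $n = -j-k$ via \ref{jk}, the definition of $j$ gives
\[
\delta j(f_1,f_2) = -\Signature(\star_{f_1,\lambda_0}) - \Signature(\star_{f_2,\lambda_0}) + \Signature(\star_{f_1 f_2,\lambda_0}),
\]
while Turaev's congruence \ref{tur} gives $\delta k \equiv \varphi \pmod 4$, where $\varphi(f_1,f_2)=\Signature(\star_{f_1,f_2})$. These facts are established by Turaev in the generality of an arbitrary symplectic isometry, so they hold over $\Sp(g,\BR)$. Combining,
\[
\delta n_{\lambda_0}(f_1,f_2) \equiv \Signature(\star_{f_1,\lambda_0}) + \Signature(\star_{f_2,\lambda_0}) - \Signature(\star_{f_1 f_2,\lambda_0}) - \Signature(\star_{f_1,f_2}) \pmod 4.
\]

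Finally, I would invoke the hypothesis: by assumption equation \ref{wtj} holds modulo $4$ for all $f_1,f_2 \in \Sp(g,\BR)$, which rearranges exactly to say that the right-hand side above equals $-\mu(\lambda_0,f_1(\lambda_0),f_1 f_2(\lambda_0))$ modulo $4$. This establishes the required 2-cocycle identity, and hence $\mathfrak r$ is a character. Since $\mathfrak r$ is a power of $i$, its values lie in $\{\pm 1,\pm i\}$. The main obstacle in this lemma is not a subtle step in the argument but the hypothesis itself: the topological proof of Proposition \ref{sigid} requires realizing $f_1,f_2$ as homology actions of surface diffeomorphisms, which is precisely what fails over $\BR$, so the present lemma isolates that as the sole obstruction to extending Theorem \ref{main} to $\Sp(g,\BR)$.
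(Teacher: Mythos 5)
Your proposal is correct and follows the paper's own route exactly: the paper proves this lemma by citing ``the argument for Lemma \ref{char},'' which is precisely the reduction to the cocycle identity $\delta n_{\lambda_0}(f_1,f_2)+\mu(\lambda_0,f_1(\lambda_0),f_1f_2(\lambda_0))\equiv 0 \pmod 4$ via $n=-j-k$, Turaev's congruence, and equation \ref{wtj} (now taken as hypothesis rather than supplied by Proposition \ref{sigid}). Your closing observation that the surface-realization step in Proposition \ref{sigid} is the sole obstruction over $\BR$ matches the paper's remark as well.
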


\begin{prop} If equation \ref{wtj} holds modulo four for all $f_1,f_2 \in {\Sp(g, \BR)}$, then  
equation \ref{eq} holds for all $f$ in ${{\Sp(g, \BR)}}$  
 \end{prop}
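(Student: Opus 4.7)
The plan is to mimic, at the $\BR$-level, the argument that Theorem \ref{main} was deduced from Lemma \ref{char}, replacing the perfectness result for $\Sp(g,\BZ)$ (used via Powell's theorem for $g\ge 3$) by the corresponding perfectness statement for the Lie group $\Sp(g,\BR)$.

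First, apply the preceding lemma to conclude that under the standing hypothesis $\mathfrak r$ is a character on $\widetilde{\Sp(g,\BR)}$ with values in $\{\pm 1,\pm i\}$. By the cited result of Lion--Vergne \cite[1.7.11]{LV}, $\mathfrak s$ is also a character on $\widetilde{\Sp(g,\BR)}$ with values in $\{\pm 1,\pm i\}$. Their product
\[
\mathfrak t(f,m) \;=\; \mathfrak r(f,m)\,\mathfrak s(f,m) \;=\; i^{\,n(f)}\,s(f)
\]
is therefore a character on $\widetilde{\Sp(g,\BR)}$ with values in the cyclic group of order four. Since the right-hand side is manifestly independent of $m$, the character $\mathfrak t$ is trivial on the central $\BZ$ generated by $(1,\mathrm{id})$, and so descends to a homomorphism
\[
\bar{\mathfrak t}:\Sp(g,\BR)\longrightarrow\{\pm 1,\pm i\}.
\]

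Next, invoke that $\Sp(g,\BR)$ is a perfect group in the abstract sense. This is standard: $\Sp(g,\BR)$ is a connected Lie group with semisimple Lie algebra, so $[\mathfrak{sp}(g,\BR),\mathfrak{sp}(g,\BR)]=\mathfrak{sp}(g,\BR)$, and one deduces that every element is a product of commutators. Any homomorphism from a perfect group to an abelian group must be trivial; applied to $\bar{\mathfrak t}$ and the abelian target $\{\pm 1,\pm i\}$, this gives $\bar{\mathfrak t}\equiv 1$, i.e.\ $i^{n(f)}s(f)=1$ for every $f\in\Sp(g,\BR)$, which is exactly equation \ref{eq}.

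The only step requiring care is verifying that $\Sp(g,\BR)$ is perfect as an abstract (not merely topological) group; this is classical for connected semisimple real Lie groups, but if a self-contained argument is desired it can be given by exhibiting explicit commutator presentations of the standard unipotent and Cartan generators (as in the $\SL_2$ case, which already covers the $g=1$ verification used elsewhere in the paper). Apart from this point, the proof is purely formal once the previous lemma is available, so the genuine mathematical content of the conjecture is entirely concentrated in establishing equation \ref{wtj} modulo $4$ for real symplectic matrices.
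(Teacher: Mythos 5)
Your proposal is correct and follows essentially the same route as the paper: the paper's proof is a one-line reference to the argument for Theorem \ref{main}, noting only that stabilization is unnecessary because $\Sp(g,\BR)$ is perfect even for low $g$, which is exactly the substitution you make. Your additional remarks on verifying perfectness of $\Sp(g,\BR)$ as an abstract group are a reasonable elaboration of a point the paper takes for granted.
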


 \begin{proof} We use essentially the same argument as in the proof of Theorem \ref{main} except we do not need to stabilize as  ${\Sp(g, \BR)}$ is perfect even for low $g$. \end{proof}
 
 \begin{prop} Equation \ref{eq} holds for all $f$ in ${{\Sp(1, \BR)}}$.\end{prop}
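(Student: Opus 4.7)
The plan is direct verification on $\Sp(1, \BR) = \SL(2, \BR)$. Writing $f = \begin{pmatrix} a & b \\ c & d \end{pmatrix}$ with $ad - bc = 1$, we have $\la_0 = \mathrm{span}(p)$ and $f(\la_0) = \mathrm{span}(ap + cq)$, so the computation of $s(f)$ splits cleanly: if $c = 0$ then $f(\la_0) = \la_0$ and an orientation comparison on $\la_0$ gives $s(f) = \sgn(a)$; if $c \ne 0$ then $\la_0 \cap f(\la_0) = 0$ and, since $\omega(p, ap + cq) = c$, one has $s(f) = i\cdot\sgn(c)$.

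For $n(f)$ I would organize the calculation around $T := \det(f - \Id) = 2 - \mathrm{tr}(f)$. When $T \ne 0$, the map $f - \Id$ is invertible and a short computation produces the matrix of $\star_f$ (in the basis $\{p,q\}$) as $T^{-1}\begin{pmatrix} c & d-1 \\ 1 - a & -b \end{pmatrix}$, with determinant $1/T$, while $\star_{f, \la_0}$ is the $1\times 1$ form $c/T$ on $\la_0$. Plugging into the definition yields $n(f) = -1 - \sgn(T)$ when $c = 0$ and $n(f) = \sgn(T)(\sgn(c) - 1) - 1$ when $c \ne 0$. The degenerate case $T = 0$ (so $f$ is unipotent, possibly $\Id$) has $(f - \Id)V$ contained in $\la_0$ when $c = 0$, and equal to $\mathrm{span}((a-1)p + cq)$, disjoint from $\la_0$, when $c \ne 0$; brief separate computations give $n(f) = 0$ in the former sub-case and $n(f) = -\sgn(c)$ in the latter.

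It remains to tabulate $n(f) \pmod{4}$ against $s(f)$: when $c = 0$ one finds $n \equiv 0$ if $a > 0$ and $n \equiv 2$ if $a < 0$, matching $s(f) = \pm 1$; when $c \ne 0$ one finds $n \equiv -1$ if $c > 0$ and $n \equiv 1$ if $c < 0$, matching $s(f) = \pm i$. There is no serious obstacle beyond the bookkeeping; the delicate point is that when $T = 0$ the subspace $\la_0 \cap (f - \Id)V$ on which $\star_{f, \la_0}$ lives can have dimension $0$ or $1$ depending on the sub-case, so the formula for $n(f)$ must be instantiated carefully. An alternative route would be to verify equation (\ref{wtj}) modulo $4$ for all $f_1, f_2 \in \Sp(1, \BR)$ and invoke the preceding proposition, but this seems more laborious than the direct computation outlined above.
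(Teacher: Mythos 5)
Your proposal is correct and takes essentially the same route as the paper: a direct case-by-case verification on $\SL(2,\BR)$, computing $s(f)$ as $\sgn(a)$ for $c=0$ and $i\,\sgn(c)$ for $c\ne 0$, and tabulating $n(f)$ by cases that match the paper's values exactly (your organization by $T=\det(f-\Id)=2-\mathrm{tr}(f)$ versus the paper's split by $c=0$ or $c\ne 0$ and $b=(a-1)(d-1)c^{-1}$ is the same dichotomy in different clothing). The only cosmetic difference is that the paper cites \cite[1.8.4]{LV} for the values of $s$ rather than recomputing them.
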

 
  \begin{proof}(sketch) We note ${{\Sp(1, \BR)}}= {{\SL(2, \BR)}}$. One easily has \cite[1.8.4]{LV} that, if $a \ne 0$, then
  $s\left ( \bmatrix  a&b \\ 0&a^{-1}\\  \endbmatrix \right) = \sgn(a) $, and if $c \ne 0$, then $s\left( \bmatrix  a&b \\ c&d\\  \endbmatrix \right)= \sgn(c) i.$
 To complete the proof  one only needs to calculate $n$ for the following cases. The results of these calculations (especially modulo 4) can be grouped together more efficiently, but the calculations proceed  differently in each case listed.
  \begin{itemize}
  \item For $a \ne 0$ and $a \ne 1$, $ n\left(\bmatrix  a&b \\ 0&a^{-1}\\  \endbmatrix \right) = \begin{cases}0 &\mbox{if } a>0  \\
-2 & \mbox{if } a<0 . \end{cases} $
 \item For $b \ne 0$, $ n\left( \bmatrix  1&b \\ 0&1\\  \endbmatrix \right) = 0$
\item  $n\left( \bmatrix  1&0 \\ 0&1\\  \endbmatrix \right) = 0$
\item For $c \ne 0$ and $b = (a-1)(d-1)c^{-1}$, 

$ n\left( \bmatrix  a&b \\ c&d\\  \endbmatrix \right) = \begin{cases}-1 &\mbox{if } c>0  \\
1 & \mbox{if } c<0 . \end{cases} $
\item For $c \ne 0$ and $b \ne  (a-1)(d-1)c^{-1}$, $$ n\left( \bmatrix  a&b \\ c&d\\  \endbmatrix  \right)= \begin{cases}-1 &\mbox{if } c>0 \mbox{ and  } 
(a-1)(d-1)>bc  \\
-3 &\mbox{if } c<0 \mbox{ and } 
(a-1)(d-1)>bc  \\
-1 &\mbox{if } c>0 \mbox{ and } 
(a-1)(d-1)<bc  \\
1 &\mbox{if } c<0 \mbox{ and } 
(a-1)(d-1)<bc.
\end{cases} $$
  \end{itemize}

   \end{proof}

 \section{The square of  equation  \ref{eq} \label{s2}}
 
 We can obtain that the square of equation \ref{eq} is valid for $f \in {{\Sp(g, \BR)}}$. Moreover the proof is much simpler 
 than the proof of Theorem \ref{main}.
 
 \begin{prop}If $f \in {{\Sp(g, \BR)}}$,  ${(s(f))}^2=(-1)^{n(f)}.$ 
\end{prop}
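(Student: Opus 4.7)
The plan is to reduce both sides to the same parity. From the Lion--Vergne definition in Section 1.1, $s(f)=i^{\,g-d}\,\epsilon(\la_0,f(\la_0))$ with $\epsilon\in\{\pm1\}$ and $d:=\dim(\la_0\cap f(\la_0))$, so squaring kills $\epsilon$ and gives $s(f)^2=(-1)^{g-d}$. The whole task reduces to verifying
\[n(f)\equiv g-d\pmod 2.\]

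Working modulo $2$ in formula \eqref{me}: since $\sgn[\det(\star_f)]\in\{\pm1\}$ is odd, the quantity $1-\sgn[\det(\star_f)]$ is even, and so $n(f)\equiv\Signature(\star_{f,\la_0})+\dim W\pmod 2$, where $W:=\Image(f-\Id)$. Because the signature and the rank of any symmetric bilinear form differ by twice the dimension of a negative-definite subspace, the problem reduces to the parity of $\mathrm{rank}(\star_{f,\la_0})=\dim U-\dim\rad(\star_{f,\la_0})$, where $U:=\la_0\cap W$.

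The crux of the argument is identifying $\rad(\star_{f,\la_0})$. Using that $f$ is symplectic gives $W^\perp=\kernel(f-\Id)$, and combining this with $\la_0$ being lagrangian yields $U^\perp=\la_0+\kernel(f-\Id)$. Unwinding the definition of $\star_{f,\la_0}$, one obtains
\[\rad(\star_{f,\la_0})=\la_0\cap(f-\Id)(\la_0)=(f-\Id)\bigl(\la_0\cap f^{-1}(\la_0)\bigr).\]
Rank-nullity applied to $f-\Id$ restricted to $\la_0\cap f^{-1}(\la_0)$, whose kernel is $\la_0\cap\kernel(f-\Id)$, combined with $\la_0\cap\kernel(f-\Id)=(\la_0+W)^\perp$ and the inclusion-exclusion $\dim(\la_0+W)=g+\dim W-\dim U$, expresses $\dim\rad(\star_{f,\la_0})$ as $d-g+\dim W-\dim U$. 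Substituting back, $\mathrm{rank}(\star_{f,\la_0})=2\dim U+g-\dim W-d$, which modulo $2$ is $g+d+\dim W$; hence $n(f)\equiv g+d+2\dim W\equiv g-d\pmod 2$, as required.

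The only step requiring care is the radical identification; everything else is routine dimension counting for pairs of lagrangian-type subspaces. In particular, neither Turaev's congruence \eqref{tur} nor the topological Proposition \ref{sigid} is invoked, which explains why this statement is substantially easier than Theorem \ref{main}.
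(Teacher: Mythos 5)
Your proof is correct, and its skeleton is the same as the paper's: both reduce the claim to the single parity congruence $g+\dim(\la_0\cap f(\la_0))\equiv \Signature(\star_{f,\la_0})-\dim(\Image(f-\Id))\pmod 2$ after the two easy observations that squaring $s$ kills $\epsilon$ and that $1-\sgn[\det(\star_f)]$ is even. The difference is what happens next: the paper simply cites \cite[Proposition 7.3]{GM} for this congruence (remarking that the proof there works over $\BR$), whereas you prove it from scratch. Your key computation checks out: the radical of $\star_{f,\la_0}$ on $U=\la_0\cap\Image(f-\Id)$ consists of those $a$ whose preimage under $f-\Id$ lies in $U^{\perp}=\la_0+\kernel(f-\Id)$ (well-defined since $\kernel(f-\Id)=\Image(f-\Id)^{\perp}\subseteq U^{\perp}$), which gives $\rad(\star_{f,\la_0})=U\cap(f-\Id)(\la_0)=\la_0\cap(f-\Id)(\la_0)$; the subsequent dimension count via rank--nullity and $\la_0\cap\kernel(f-\Id)=(\la_0+\Image(f-\Id))^{\perp}$ is right, and the rank of a real symmetric form does have the same parity as its signature. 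So your argument buys self-containedness --- the reader need not chase the reference to \cite{GM} --- at the cost of a page of linear algebra that the paper compresses into a citation; it also makes transparent exactly which structural facts (only that $\la_0$ is lagrangian and $f$ symplectic, hence valid for all of $\Sp(g,\BR)$) the congruence depends on.
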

\begin{proof}
 From the definition of $s$, one easily has that $${(s(f))}^2= (-1)^{g+ \dim(\la_0 \cap f(\la_0))}.$$
 From the definition of $n_{\la_0}$, one easily has that  $$(-1)^{n(f)}= (-1)^{\Signature(\star_{f,\lambda_0})-\dim(\Image(f-1))}.$$
  By \cite[Propostion 7.3]{GM} (whose proof is valid for $f \in {{\Sp(g, \BR)}}$), 
  $${g+ \dim(\la_0 \cap f(\la_0))} = \Signature(\star_{f,\lambda_0})-\dim(\Image(f-1)) \pmod{2}.$$
\end{proof}

 \section{Universal covers and universal central extensions \label{fur}}
 
 Theorem \ref{main} and certain results in \cite{LV,GM} suggest a description 
  of the universal central extension of ${\Sp(g, \BZ)}$ as a subgroup of the universal covering group of ${\Sp(g, \BR)}$ which we now give as Corollary \ref{uce}.  We were not able to find a  proof in the literature, but we are informed that this result is folklore.

 As $\pi_1({\Sp(g, \BR)}) = \BZ$, the universal covering group of ${\Sp(g, \BR)}$ is an infinite cyclic cover which we will denote by 
 $\mathcal{UC}({\Sp(g, \BR)})$.   
 Consider the central extensions $E_g$ of  ${\Sp(g, \BZ)}$ defined in the following diagram:
 \[
 \begin{CD}
0 @>>> \BZ @>>>  E_g= \mathcal{UC}({\Sp(g, \BR)})\cap \pi^{-1}   {\Sp(g, \BZ)}  @>>>  {\Sp(g, \BZ)}@>>> 1    \\
@.          @V=VV         @VVV                                     @VV \iota V        @.   \\
0 @>>> \BZ @>>> \mathcal{UC}({\Sp(g, \BR)})  @>\pi >>  {\Sp(g, \BR)}  @>>> 1 .   
\end{CD}
\] 
For $g=1$, this  extension is  described by  Milnor in  \cite[Thm 10.5]{M} and a remark following it. 

  \begin{cor}\label{uce}  If $g  \ge 4$, $E_g$ is a universal central extension of ${\Sp(g, \BZ)}$; i.e. a universal central extension of ${\Sp(g, \BZ)}$ is
given by  the inverse image of ${\Sp(g, \BZ)}$ under the universal covering projection $\mathcal{UC}({\Sp(g, \BR)}) \xrightarrow{\pi} {\Sp(g, \BR)}$.
  \end{cor}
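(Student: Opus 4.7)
The strategy is to verify the two conditions characterizing $E_g$ as a universal central extension of $\Sp(g,\BZ)$: namely, that $E_g$ is perfect, and that $H_2(E_g,\BZ) = 0$. I would rely on two classical external inputs: Powell's theorem \cite{P} that $\Sp(g,\BZ)$ is perfect for $g \geq 3$, and the computation $H_2(\Sp(g,\BZ),\BZ) \cong \BZ$ for $g \geq 4$ (to be cited from the literature on the cohomology of arithmetic groups). Given these, the universal central extension $U$ of $\Sp(g,\BZ)$ exists and is a central $\BZ$-extension, so the corollary reduces to producing the canonical isomorphism $U \xrightarrow{\sim} E_g$ of central extensions.

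By the Lyndon--Hochschild--Serre five-term exact sequence applied to $0 \to \BZ \to E_g \to \Sp(g,\BZ) \to 1$, both conditions above reduce simultaneously to the single claim that the extension class $[E_g] \in H^2(\Sp(g,\BZ),\BZ) \cong \BZ$ is a generator. Indeed the relevant sequence
\[
H_2(E_g,\BZ) \to H_2(\Sp(g,\BZ),\BZ) \xrightarrow{\,d\,} \BZ \to H_1(E_g,\BZ) \to 0
\]
(using $H_1(\Sp(g,\BZ),\BZ)=0$) has $d$ equal to multiplication by the integer $k$ with $[E_g] = k$ times a generator. So $H_1(E_g,\BZ) = \BZ/k\BZ$ and $H_2(E_g,\BZ) = \ker d$, and both vanish precisely when $|k| = 1$.

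To show $[E_g]$ is primitive, I would combine Theorem \ref{main} with the normalization of the Maslov cocycle. The congruence $\delta(-n) \equiv \nu \pmod 4$ established inside the proof of Lemma \ref{char} shows the Maslov class $[\nu] \in H^2(\Sp(g,\BZ),\BZ)$ is divisible by $4$, so $[\nu] = 4\alpha$ for a unique $\alpha \in H^2(\Sp(g,\BZ),\BZ)$. I would then extract from \cite{LV} the normalization statement that the Maslov cocycle on $\Sp(g,\BR)$ represents $\pm 4$ times the Euler class $e_U$ of $\mathcal{UC}(\Sp(g,\BR))$ in continuous cohomology; restricting to $\Sp(g,\BZ)$ yields $\alpha = \pm[E_g]$. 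Finally, primitivity of $\alpha$ would be checked by pairing it with an explicit generator of $H_2(\Sp(g,\BZ),\BZ)$, e.g.\ a class produced via the surjection $h:\Gamma_g \to \Sp(g,\BZ)$ from a suitable chain of Dehn twists in the mapping class group.

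The main obstacle is this last step. The $4$-divisibility supplied by Theorem \ref{main} has the correct \emph{shape} but does not by itself rule out $\alpha$ being a higher multiple of a generator; closing the gap requires either an explicit evaluation of $\alpha$ against a known generator of $H_2(\Sp(g,\BZ),\BZ)$, or invoking the identification of continuous $H^2$-generators of $\Sp(g,\BR)$ with classes on the Siegel modular variety which restrict primitively to $\Sp(g,\BZ)$. This is precisely where the folklore character of the result---and the difficulty of locating a clean reference---comes in.
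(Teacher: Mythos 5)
There is a genuine gap here, and it is the one you flag yourself at the end: your argument reduces the corollary to the primitivity of the extension class $[E_g]$ in $H^2(\Sp(g,\BZ),\BZ)\cong\BZ$, but then does not establish primitivity. The route you sketch through the Lion--Vergne normalization is moreover circular at the decisive point: knowing that the Maslov class restricts to $\pm 4\,[E_g]$ only re-expresses $[E_g]$ as one quarter of $[\nu]$, and the $4$-divisibility of $[\nu]$ coming from $\delta(-n)\equiv\nu\pmod 4$ is consistent with $[E_g]$ being \emph{any} integer; whether that integer is $\pm 1$ is exactly the assertion to be proved. (A smaller point: the five-term sequence gives $H_1(E_g)=\BZ/k$ and a surjection of $H_2(E_g)$ onto $\ker d$, but vanishing of $H_2(E_g)$ needs the standard UCE criterion for perfect groups with $H_2$ free rather than the sequence alone; this is repairable, unlike the primitivity gap.)

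The paper closes precisely this gap by importing the primitivity from the mapping class group rather than computing it in $\Sp(g,\BZ)$. By \cite[Prop 10.1]{GM}, for $g\ge 4$ the index-four subgroup $\{(\ff,m)\mid n(\ff)\equiv m\pmod 4\}$ of $\widetilde\Gamma_g$, which is $\kernel(\mathfrak r\circ\tilde\iota\circ\tilde h)$, is already known to be a universal central extension of $\Gamma_g$; Putman's result that $h_*\colon H_2(\Gamma_g)\to H_2(\Sp(g,\BZ))$ is an isomorphism (together with perfectness and $H_2(\Sp(g,\BZ))=\BZ$) then transfers this to show that $\kernel(\mathfrak r\circ\tilde\iota)$ is a universal central extension of $\Sp(g,\BZ)$. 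On the other side, Lion--Vergne identify $\kernel(\mathfrak s)$ with $\mathcal{UC}(\Sp(g,\BR))$, so $\kernel(\mathfrak s\circ\tilde\iota)=E_g$, and Theorem \ref{main} forces the two kernels to coincide. So the evaluation against an explicit generator of $H_2(\Sp(g,\BZ),\BZ)$ that you correctly identify as the missing step is, in effect, supplied by the topological construction in \cite{GM} on the mapping class group side; without that input (or an equivalent one) your argument does not go through.
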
 
  
  \begin{proof}
  One may define  $\widetilde {\Gamma_g}= \{ (\ff,m) | \ff \in \Gamma_g, m \in \BZ \}$ with multiplication given by 
\begin{equation} \label{cocy2}(\ff_1,m_1) \cdot (\ff_2, m_2)= (\ff_1 \ff_2, m_1 +m_2 + \mu(\la_0,{\ff_1}_*(\la_0),{\ff_1}_*{\ff_2 }_*(\la_0)) ).\end{equation}
Compare \cite{W,GM}.   
 One has the following commutative diagram with exact
rows. 
\[
 \begin{CD}
 0 @>>> \BZ @>>> \widetilde\Gamma_g @>>> \Gamma_g @>>> 1    \\
 @.          @V=VV         @VV{\tilde h}V                                     @VVhV           @.   \\
0 @>>> \BZ @>>>  \widetilde{\Sp(g, \BZ)}  @>>>  {\Sp(g, \BZ)}@>>> 1    \\
@.          @V=VV         @VV{\tilde \iota}V                                     @VV \iota V        @.   \\
0 @>>> \BZ @>>> \widetilde{\Sp(g, \BR)}  @>>>  {\Sp(g, \BR)}  @>>> 1 .   
\end{CD}
\] 

It was noted in \cite[Prop 10.1]{GM}, that if $g \ge 4$,   then  $H_2({\Gamma_g})=\BZ$ and the subgroup of $\widetilde \Gamma_g$ given by
$ \{ (\ff,m) | n(\ff )=m \pmod{4} \}$  is a universal central extension of $\Gamma_g$.
Thus the cohomology class which classifies this extension  generates $H^2(\Gamma_g)$.   
We note that $ \{ (\ff,m) | n(\ff )=m  \pmod{4}\}= \kernel(\mathfrak r \circ \tilde i \circ \tilde h)$.  
According to \cite[Thm 5.1]{ P} (still assuming $g \ge 4$), ${\Sp(g, \BZ)}$ is perfect  and $H_2({\Sp(g, \BZ)})=\BZ$. So ${\Sp(g, \BZ)}$ has a universal central extension by $\BZ$.
By Putman \cite[Lemma 7.5]{P}, $h$ induces an isomorphism $H_2( \Gamma_g) \rightarrow H_2({\Sp(g, \BZ)})$. Thus the pull-back of a universal central extension of $\Gamma_g$ to an extension of ${\Sp(g, \BZ)}$ is a universal central extension of ${\Sp(g, \BZ)}$, which we will denote by $\mathcal{UCE}({\Sp(g, \BZ)})$.
Thus the index 4 subgroup of $\widetilde{\Sp(g, \BZ)}$ given by $\kernel(\mathfrak r \circ \tilde \iota)$  
is $\mathcal{UCE}({\Sp(g, \BZ)})$. 

According to \cite[p. 94]{LV}\footnote{This result is not  explicitly proved in  \cite{LV}  but it follows from
\cite[1.9.16]{LV}}, the kernel of $\mathfrak s$ in $\widetilde{\Sp(g, \BR)}$ is  $\mathcal{UC}({\Sp(g, \BR)})$.
Thus the $\kernel(\mathfrak s \circ \tilde \iota)$ is $\mathcal{UC}({\Sp(g, \BR)})\cap \pi^{-1}   {\Sp(g, \BZ)}.$

By the proof of Theorem \ref{main}, $\mathfrak r \circ \tilde \iota= \mathfrak s \circ \tilde \iota.$
Thus $\mathcal{UCE}({\Sp(g, \BZ)})  = \mathcal{UC}({\Sp(g, \BR)})\cap \pi^{-1}   {\Sp(g, \BZ)}.$ 
\end{proof}

If one takes the known  Corollary \ref{uce} as a starting point, one can 
reverse the above argument and view Theorem \ref{main} as a corollary of it's corollary. We note that
this proof  makes implicit use of Lemma \ref{char}.

\begin{proof}[ Proof that Corollary \ref{uce} 
implies
Theorem \ref{main}] \quad
\newline
\noindent Using the stabilization argument used in the proof of Theorem \ref{main}, it is enough to show 
$\mathfrak r  \circ \tilde \iota$ and ${\mathfrak s  \circ \tilde \iota}$ are reciprocals  for large $g$. Starting with the Gilmer-Masbaum description of the universal central extension of the mapping class group in terms of $n$, one concludes  as in the above proof that 
$ \kernel(\mathfrak r \circ \tilde \iota)$ is a universal central extension of ${\Sp(g, \BZ)}$. Starting from the Lion-Vergne
description of the universal cover of $\Sp(g, \BR)$ in terms of $s$, one obtains 
$ \kernel(\mathfrak s \circ \tilde \iota)=\mathcal{UC}({\Sp(g, \BR)}) \cap \pi^{-1}   {\Sp(g, \BZ)}$ which by hypothesis is a universal central extension of  ${\Sp(g, \BZ)}$. Thus 
  $\kernel(\mathfrak r \circ \tilde \iota)=\kernel(\mathfrak s\circ \tilde \iota)$. It follows that either  
  $\mathfrak r \circ \tilde \iota$ and $\mathfrak s \circ \tilde \iota$ agree or one is the  reciprocal of the other. Consideration of
  the $i^{-m}$ and $i^{m}$ terms in the defining formulas for  $\mathfrak r$ and $\mathfrak s$ shows that they must be reciprocals.
\end{proof}

The  following corollary  of Theorem \ref{main} generalizes a remark made by Milnor in the case $g=1.$

\begin{cor} The fundamental group of the orbit space ${\Sp(g, \BR)}/{\Sp(g, \BZ)}$ is
$E_g$. In particular, if $g  \ge 4$, the fundamental group of  ${\Sp(g, \BR)}/{\Sp(g, \BZ)}$ is isomorphic to 
a universal central extension of ${\Sp(g, \BZ)}$.
\end{cor}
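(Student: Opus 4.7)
The plan is to compute the fundamental group using covering space theory, by realizing $\Sp(g,\BR)/\Sp(g,\BZ)$ as the quotient of the simply connected space $\mathcal{UC}(\Sp(g,\BR))$ by a free and properly discontinuous action of $E_g$. First, I would observe that $\Sp(g,\BZ)$ acts on $\Sp(g,\BR)$ on the right by translation, and this action is automatically free (right translation in any group is free) and properly discontinuous (since $\Sp(g,\BZ)$ is discrete in the Lie group $\Sp(g,\BR)$). Hence the quotient map $\Sp(g,\BR) \to \Sp(g,\BR)/\Sp(g,\BZ)$ is a regular covering with deck group $\Sp(g,\BZ)$.

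Next I would lift to the universal cover. Let $\pi:\mathcal{UC}(\Sp(g,\BR))\to\Sp(g,\BR)$ be the universal covering homomorphism, with kernel $\BZ$. The subgroup $E_g=\pi^{-1}(\Sp(g,\BZ))$ is discrete, as the preimage of a discrete subset under a covering map is discrete, so $E_g$ acts on $\mathcal{UC}(\Sp(g,\BR))$ on the right (again freely and properly discontinuously). The composite $\mathcal{UC}(\Sp(g,\BR))\xrightarrow{\pi}\Sp(g,\BR)\to \Sp(g,\BR)/\Sp(g,\BZ)$ is constant on $E_g$-orbits and descends to a continuous bijection $\mathcal{UC}(\Sp(g,\BR))/E_g \to \Sp(g,\BR)/\Sp(g,\BZ)$, which is a homeomorphism since both spaces carry the quotient topology induced from $\mathcal{UC}(\Sp(g,\BR))$.

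Because $\mathcal{UC}(\Sp(g,\BR))$ is simply connected and $E_g$ acts freely and properly discontinuously, the quotient $\mathcal{UC}(\Sp(g,\BR))\to \Sp(g,\BR)/\Sp(g,\BZ)$ is the universal covering of $\Sp(g,\BR)/\Sp(g,\BZ)$ and its deck group is $E_g$. Therefore $\pi_1(\Sp(g,\BR)/\Sp(g,\BZ))\cong E_g$, proving the first assertion. The second assertion is then immediate from Corollary \ref{uce}, which identifies $E_g$ with a universal central extension of $\Sp(g,\BZ)$ when $g\ge 4$.

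There is no serious obstacle here; the only point to check carefully is the identification $\mathcal{UC}(\Sp(g,\BR))/E_g \cong \Sp(g,\BR)/\Sp(g,\BZ)$, which reduces to the tautology that two points of $\mathcal{UC}(\Sp(g,\BR))$ differ by an element of $E_g$ precisely when their $\pi$-images differ by an element of $\Sp(g,\BZ)$.
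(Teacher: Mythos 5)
Your argument is correct and follows essentially the same route as the paper: both identify $\Sp(g,\BR)/\Sp(g,\BZ)$ with $\mathcal{UC}(\Sp(g,\BR))/E_g$ via the natural homeomorphism and then read off $\pi_1$ as the deck group $E_g$ acting on the simply connected space $\mathcal{UC}(\Sp(g,\BR))$, with the second assertion following from Corollary \ref{uce}. You merely spell out the freeness and proper discontinuity of the action, which the paper leaves implicit.
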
 

\begin{proof} We extend the diagram before the statement of Corollary \ref{uce} by adding the orbit spaces of two actions by subgroups. 
 \[
 \begin{CD}
0 @>>> \BZ @>>>  E_g  @>>>                             {\Sp(g, \BZ)}@>>> 1    \\
@.          @V=VV         @VVV                                     @VV \iota V        @.   \\
0 @>>> \BZ @>>> \mathcal{UC}({\Sp(g, \BR)})  @>>>  {\Sp(g, \BR)}  @>>> 1 \\
@.       @.                  @VVV                                                   @VVV    \\
\     @.       \     @. \mathcal{UC}({\Sp(g, \BR)}) / E_g    @>\beta >>       {\Sp(g, \BR)}/ {\Sp(g, \BZ)}    @.    \      
\end{CD}
\] 
As $\mathcal{UC}({\Sp(g, \BR)})$ is simply connected,  $\pi_1(\mathcal{UC}({\Sp(g, \BR)})/E_g) \approx E_g.$
Since the induced map $\beta$ is a homeomorphism,
 $\pi_1({\Sp(g, \BR)}/ {\Sp(g, \BZ)})  \approx E_g$.
\end{proof}
 
 \section{Final Comments}
 
Central extensions of the mapping class group are used to upgrade projective representations arising in topological quantum field theory (TQFT) by honest representations \cite{W, MR}. More  generally an extension of the three dimensional cobordism category is used to remove the projective ambiguity of TQFT maps induced by more general cobordisms than mapping cylinders \cite{W,T}. An index two subcategory of the extended cobordism category \cite{G} proved useful in demonstrating that certain projective modules associated to surfaces by an integral version TQFT are free. In  \cite{GM}, the function $n$ was defined in order to describe an index four subgroup of the extended mapping class group. This allowed Masbaum and the author to define modular representations of the unextended mapping class group.
 In \cite[ Remark 7.5]{GM},  it is asked  whether there is a corresponding  index  four subcategory of the 3-dimensional extended cobordism category. As $s$ gives a very different description of this same index four subgroup, it is plausible to hope that Theorem \ref{main} might help answer this question. As a tentative step in this direction, Wang \cite{Wa} makes use of Theorem \ref{main} to define a version of $n$  for connected extended cobordisms  which have been further enhanced with a choice of orientation for the lagrangians  that are part of  the extended structure. This version of $n$ agrees with $n$ when applied to mapping cylinders.

\end{document}